\newcommand{\size}[1]{\left \vert #1 \right \vert}
\newcommand{\ceil}[1]{\left \lceil #1 \right \rceil}
\newcommand{\floor}[1]{\left \lfloor #1 \right \rfloor}
\newcommand{\cart}{\, \Box \,}
\newcommand{\loc}[1]{\zeta(#1)}
\newcommand{\vl}{v_{L}}
\newcommand{\vr}{v_R}
\newcommand{\dl}{d_L}
\newcommand{\dr}{d_R}
\newtheorem{theorem}{Theorem}[section]
\newtheorem{cor}[theorem]{Corollary}
\begin{document}

\title{Bounds on the localization number}

\author{Anthony Bonato}
\address{Department of Mathematics, Ryerson University, Toronto, ON, Canada, M5B 2K3}
\email{\tt abonato@ryerson.ca}

\author{William B. Kinnersley}
\address{Department of Mathematics, University of Rhode Island, University of Rhode Island, Kingston, RI,USA, 02881}
\email{\tt billk@uri.edu}

\subjclass[2010]{Primary 05C57; Secondary 05C85}
\keywords{localization game, graph searching, pursuit-evasion games, degeneracy, outerplanar graphs, hypercubes}

\begin{abstract}
We consider the localization game played on graphs, wherein a set of cops attempt to determine the exact location of an invisible robber by exploiting distance probes. The corresponding optimization parameter for a graph $G$ is called the localization number and is written $\loc{G}$. We settle a conjecture of \cite{nisse1} by providing an upper bound on the chromatic number as a function of the localization number. In particular, we show that every graph with $\loc{G} \le k$ has degeneracy less than $3^k$ and, consequently, satisfies $\chi(G) \le 3^{\loc{G}}$.  We show further that this degeneracy bound is tight. We also prove that the localization number is at most 2 in outerplanar graphs, and we determine, up to an additive constant, the localization number of hypercubes.
\end{abstract}

\maketitle

\begin{section}{Introduction}
\emph{Graph searching} focuses on the analysis of games and graph processes that model some form of intrusion in a network and efforts to eliminate or contain that intrusion. One of the best known examples of graph searching is the game of {\em Cops and Robbers}, wherein a robber is loose on the network and a set of cops attempts to capture the robber. How the players move and the rules of capture depend on which variant is studied. There are many variants of graph searching studied in the literature, which are either motivated by problems in practice or inspired by foundational issues in computer science, discrete mathematics, and artificial intelligence, such as robotics and network security. For a survey of graph searching see \cite{bp,by,fomin}, and see \cite{bonato} for more background on Cops and Robbers.

We focus in the present paper on a variant of Cops and Robbers, called the {\em localization game}, in which the cops only have partial information on the location of the robber. The variant we discuss is motivated by a real-world tracking problem with mobile receivers and a cell phone user. The receivers are placed in various locations, and the user is in motion and is only detectable by the strength of their signal to the receivers (measured by their distance to the receivers). The receivers, who do not know the user's location, may appear anywhere and relocate over time. The goal is to uniquely determine the location of the user. See, for example, \cite{bahl}.

The localization game was first introduced for one receiver by Seager \cite{seager1,seager2} and was further studied in \cite{brandt,car}. In this game, there are two players moving on a connected graph, with one player controlling a set of $k$ \emph{cops}, where $k$ is a positive integer, and the second controlling a single \emph{robber}. Unlike in Cops and Robbers, the cops play with imperfect information: the robber is invisible to the cops during gameplay. The game is played over a sequence of discrete time-steps; a \emph{round} of the game is a move by the cops together with the subsequent move by the robber. The robber occupies a vertex of the graph, and when the robber is ready to move during a round, he may move to a neighboring vertex or remain on his current vertex. A move for the cops is a placement of cops on a set of vertices (note that the cops are not limited to moving to neighboring vertices). At the beginning of the game, the robber chooses his starting vertex. After this, the cops move first, followed by the robber; thereafter, the players move on alternate steps. Observe that any subset of cops may move in a given round. In each round, the cops occupy a set of vertices $u_1, u_2, \ldots , u_k$ and each cop sends out a \emph{cop probe}, which gives their distance $d_i$, where $1\le i \le k$, from $u_i$ to the robber. Hence, in each round, the cops determine a \emph{distance vector} $(d_1, d_2, \ldots , d_k)$ of cop probes, which is unique up to the ordering of the cops. Note that relative to the cops' position, there may be more than one vertex $x$ with the same distance vector. We refer to such a vertex $x$ as a \emph{candidate}. For example, in an $n$-vertex clique with a single cop, so long as the cop is not on the robber's vertex, there are $n-1$ many candidates. The cops win if they have a strategy to determine, after finitely many rounds, a unique candidate, at which time we say that the cops {\em capture} the robber. If there is no unique candidate in a given round, then the robber may move in the next round and the cops may move to other vertices resulting in an updated distance vector. The robber wins if he is never captured.

For a connected graph $G$, define the \emph{localization number} of $G$, written $\loc{G}$, to be the least integer $k$ for which $k$ cops have a winning strategy over any possible strategy of the robber (that is, we consider the worst case that the robber a priori knows the entire strategy of the cops). As placing a cop on each vertex gives a distance vector with unique value of $0$ on the location of the robber, $\loc{G}$ is at most $n$ and hence is well-defined.

The localization number is related to the metric dimension of a graph, in a way that is analogous to how the cop number is related to the domination number. The \emph{metric dimension} of a graph $G$, written $\mathrm{dim}(G)$, is the minimum number of cops needed in the localization game so that the cops can win in one round; see \cite{hm,slater}. Hence, $\loc{G} \le \mathrm{dim}(G)$, but in many cases this inequality is far from tight. The bound of $\loc{G} \le \floor{\frac{(\Delta +1)^2}{4}} +1$, where $\Delta$ is the maximum degree of $G$, was shown in \cite{has}. In \cite{nisse1}, Bosek et al. showed that $\loc{G}$ is bounded above by the pathwidth of $G$ and that the localization number is unbounded even on graphs obtained by adding a universal vertex to a tree. They also proved that computing $\loc{G}$ is \textbf{NP}-hard for graphs with diameter 2, and they studied the localization game for geometric graphs. The \emph{centroidal localization game} was considered in \cite{nisse2}, where it was proved, among other things, that the centroidal localization number (and hence the localization number) of outerplanar graphs is at most 3.  In \cite{dfp}, the localization number was studied for binomial random graphs with diameter 2.

Bosek et al. conjectured (see \cite{nisse1}, Conjecture 16) that there is a function $f$ such that every graph with $\loc{G} \le k$ satisfies $\chi(G) \le f(k),$ where $\chi(G)$ is the chromatic number of $G$. We settle this conjecture in Corollary~\ref{cor:chromatic}. In particular, by exploiting a lower bound on the localization number using graph degeneracy, we show that $\chi(G) \le 3^{\loc{G}}$. The degeneracy bound is proven to be tight via a non-trivial example utilizing a graph built from strong powers of cycles. In Theorem~\ref{thm:outerplanar}, we prove that outerplanar graphs have localization number at most 2. We finish by giving an asymptotically tight upper bound on the localization number of the hypercube; in particular, in Theorem~\ref{thm:hypercube}, we show that for all positive integers $n$, $\loc{Q_n} \le \ceil{\log_2 (n-1)} + 2$.

Throughout, all graphs considered are simple, undirected, connected, and finite. For a reference on graph theory, see \cite{Wes01}.
\end{section}

\begin{section}{Degeneracy and localization}

Our first result is a general lower bound on the localization number of a graph in terms of its degeneracy.  The {\em degeneracy} of a graph $G$ is the maximum, over all subgraphs $H$ of $G$, of $\delta(H)$. Note that the degeneracy of any nonempty graph must be a positive integer. For a vertex $u$ in a graph $G$, we define $N_G[u]$ to be the set of neighbors of $u$ along with the vertex $u$ itself.

\begin{theorem}\label{thm:degeneracy}
If $G$ is a graph with degeneracy $k$, where $k$ is a positive integer, then $\loc{G} \ge \log_3 (k+1)$.
\end{theorem}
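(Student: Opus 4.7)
The plan is to prove the contrapositive: if $G$ contains a subgraph $H$ with $\delta(H) \ge 3^c$ (for some integer $c$), then $c$ cops cannot win the localization game on $G$. I would exhibit a robber strategy that stays entirely within $H$ and guarantees that at every round, the cops' post-probe candidate set has size at least $2$.

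The combinatorial heart of the argument is a pigeonhole on local signatures. Fix a vertex $v \in V(H)$ and any placement of cops at $u_1, \ldots, u_c$. For each $x \in N_H[v]$, the triangle inequality gives $d(u_i, x) \in \{d(u_i, v) - 1, \, d(u_i, v), \, d(u_i, v) + 1\}$, so the local signature $\sigma(x) = (d(u_i, x) - d(u_i, v))_{i=1}^c$ takes values in $\{-1, 0, 1\}^c$, a set of size only $3^c$. Since $|N_H[v]| \ge \delta(H) + 1 \ge 3^c + 1$, pigeonhole forces two distinct vertices of $N_H[v]$ to share a signature, meaning that no probe from $(u_1, \ldots, u_c)$ can distinguish them.

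The strategy is for the robber to maintain the invariant that $N_H[v_t] \subseteq C_t$ at the start of each round $t$, which is trivially satisfied initially (as $C_1 = V(G)$) and persists if the robber stays put, since $v_t \in C_t'$ implies $N_H[v_t] \subseteq N_G[v_t] \subseteq N[C_t'] = C_{t+1}$. At each round, pigeonhole yields matching signatures in $N_H[v_t]$; in the favorable case two vertices of $N_H[v_t]$ share signature $\mathbf{0}$, giving at least two candidates in $C_t'$ and defeating the probe, whereupon the robber stays put and the invariant is preserved.

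The main obstacle is the case where $v_t$ is the unique signature-$\mathbf{0}$ vertex of $N_H[v_t]$; pigeonhole then produces $x, y \in N_H(v_t)$ sharing a nonzero signature $s$, but neither lies in $C_t'$, so the invariant $N_H[v_t] \subseteq C_t$ does not on its own prevent $|C_t'| = 1$. Overcoming this requires the robber to move rather than stay and to carry a richer invariant through the play: for instance, moving to $x$ forces $y$ into $C_{t+1}$ (as $y \in N[v_t] \subseteq N[C_t']$), and one must show that the cops' ongoing inability to tell $x$ from $y$ propagates indistinguishability forward. Engineering this strengthened invariant and the concomitant robber motion is the technical crux; once this is settled, we conclude that $c$ cops cannot win and hence $\loc{G} \ge \log_3(k+1)$.
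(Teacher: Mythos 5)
Your pigeonhole computation is exactly the one the paper uses: on $N_H[v]$ there are only $3^c$ possible signatures in $\{-1,0,1\}^c$, while $\size{N_H[v]} \ge k+1 > 3^c$, so some two vertices of the closed neighborhood are indistinguishable by any single probe from $(u_1,\dots,u_c)$. But the proposal is not a complete proof: you explicitly leave the ``technical crux'' --- which vertex the robber should move to and why indistinguishability propagates across rounds --- unresolved, and that is precisely the step that converts the local pigeonhole fact into a perpetual evasion strategy. As written you have the key lemma but not the theorem.

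The gap closes much more easily than you suggest, and without any candidate-set invariant. Because $\loc{G}$ is defined against a robber who knows the cops' entire strategy, and because the cops' probe in round $t+1$ is a function only of the observations from rounds $1,\dots,t$ (all of which are determined before the robber makes his move in round $t$), the robber can compute the cops' \emph{next} probe before choosing where to move. He applies your pigeonhole to that upcoming probe, finds $w,x \in N_H[v]$ with equal distance vectors relative to it, and moves to $w$. The cops then cannot localize him: the alternative history that coincides with his actual trajectory at every earlier step but ends at $x$ instead of $w$ is a legal robber play producing the identical sequence of observations, so at least two candidates survive. No strengthened invariant is needed, since the real and phantom trajectories differ only in their final vertex. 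Your difficulty arose from applying the pigeonhole to the cops' \emph{current} probe and trying to have the robber stay put; the paper's robber always moves, and always with respect to the probe that is about to happen.
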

\begin{proof}
Let $G$ be a graph with degeneracy $k$ and let $H$ be a subgraph of $G$ with $\delta(H) = k$.  Suppose we play the localization game on $G$ with $m$ cops.  It suffices to show that the robber can win provided that $m < \log_3 (k+1)$.  In particular, we show how he can perpetually evade capture while always occupying a vertex of $H$.

Toward this end, we claim that for all $v \in V(H)$, and for every cop probe $(u_1, u_2, \dots, u_m)$, there are at least two vertices in $N_H[v]$ sharing the same distance vector.  Let $d_i = d_G(u_i,v)$, and note that for all $w \in N_H[v]$ we have $d_G(u_i,w) \in \{d_i-1,d_i,d_i+1\}$.  Thus, between them, the vertices of $N_H[v]$ correspond to at most $3^m$ different distance vectors.  Since $m < \log_3 (k+1)$, there are at most $k$ distance vectors represented in $N_H[v]$; since $\size{N_H[v]} \ge k+1$, by the Pigeonhole Principle some distance vector corresponds to at least two vertices in $N_H[v]$, as claimed.

The robber's strategy is now straightforward.  Suppose that, on some robber turn, the robber occupies some vertex $v$ in $H$.  If in fact the robber is choosing an initial position, then he instead pretends that he already occupies some arbitrary vertex $v$ of $H$ and wishes to move to some neighbor of $v$.  Before making his move, the robber considers the cops' subsequent probe.  He next finds some two vertices in $N_H[v]$, say $w$ and $x$, that share the same distance vector with respect to this probe.  The robber moves to $w$; the cops cannot uniquely locate him, since to the best of their knowledge, he could occupy either $w$ or $x$.  Thus the game continues.  The robber can repeat this strategy indefinitely, thereby forever evading capture.
\end{proof}

Johnson and Koch \cite{joh} proved that under a slightly different model of the localization game, if $\loc{G} = 1$, then $\chi(G) \le 4$.  In the game they studied, the robber was not allowed to move to a vertex that the cops had just probed.  Our model gives the robber slightly more power and thus can slightly lower the localization number.  In particular, under our model, if $\loc{G} = 1$, then $\chi(G) \le 3$.  Bosek et al.\ \cite{nisse1} asked whether $\chi(G)$ is, in general, bounded above by some function of $\loc{G}$.  We answer this question in the affirmative; Theorem~\ref{thm:degeneracy} yields a short proof.

\begin{cor}\label{cor:chromatic}
For every graph $G$, we have $\chi(G) \le 3^{\loc{G}}$.
\end{cor}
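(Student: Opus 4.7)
The plan is to derive the corollary directly from Theorem~\ref{thm:degeneracy} via the classical greedy-coloring bound for degenerate graphs.

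First I would handle the trivial case in which $G$ has no edges: then $G$ must be a single vertex (since $G$ is connected), so $\chi(G)=1 \le 3^{\loc{G}}$ and there is nothing to do. Otherwise, let $k$ denote the degeneracy of $G$, which is a positive integer. Theorem~\ref{thm:degeneracy} gives $\loc{G} \ge \log_3(k+1)$, which rearranges to $k+1 \le 3^{\loc{G}}$.

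Next I would invoke the standard fact that any graph with degeneracy $k$ admits a proper coloring with at most $k+1$ colors: order the vertices $v_1, v_2, \ldots, v_n$ so that each $v_i$ has at most $k$ neighbors among $v_{i+1},\ldots,v_n$ (such an ordering exists by repeatedly removing a vertex of minimum degree in the current subgraph), then color $v_n, v_{n-1}, \ldots, v_1$ greedily; at each step the vertex being colored has at most $k$ already-colored neighbors, so at most $k$ colors are forbidden and one of the first $k+1$ colors is available. Therefore $\chi(G) \le k+1 \le 3^{\loc{G}}$, which is the desired inequality.

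There is essentially no obstacle here: all the work was done in Theorem~\ref{thm:degeneracy}, and the corollary is just the translation of its degeneracy bound into a chromatic-number bound via greedy coloring. The only thing to be slightly careful about is reminding the reader that the degeneracy-plus-one bound on $\chi$ is standard, and dispatching the empty-graph corner case so that Theorem~\ref{thm:degeneracy} can be applied with $k$ a positive integer.
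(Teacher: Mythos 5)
Your proposal is correct and follows exactly the same route as the paper: apply the standard bound $\chi(G) \le k+1$ for a graph of degeneracy $k$, then use Theorem~\ref{thm:degeneracy} to bound $k+1$ by $3^{\loc{G}}$. The only difference is that you spell out the greedy-coloring argument and the edgeless corner case, which the paper leaves as ``well-known.''
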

\begin{proof}
Let $G$ be any graph and let $k$ be its degeneracy.  It is well-known that $\chi(G) \le k+1$, which in turn is at most $3^{\loc{G}}$ by Theorem~\ref{thm:degeneracy}.
\end{proof}

When $G$ is bipartite, Theorem~\ref{thm:degeneracy} can be improved.

\begin{theorem}\label{thm:degeneracy_bipartite}
If $G$ is a bipartite graph with degeneracy $k$, where $k$ is a positive integer, then $\loc{G} \ge \log_2 k$.
\end{theorem}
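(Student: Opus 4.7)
The plan is to follow the proof of Theorem~\ref{thm:degeneracy} almost verbatim, sharpening one step using the parity structure of distances in a bipartite graph. The key observation is that if $G$ is bipartite and $w$ is adjacent to $v$, then for every $u \in V(G)$ the quantities $d_G(u,v)$ and $d_G(u,w)$ have opposite parity and differ by at most $1$, so $d_G(u,w) \in \{d_G(u,v)-1,\, d_G(u,v)+1\}$. This cuts the number of possible distance values per cop from three down to two, which is precisely what replaces $\log_3$ with $\log_2$.

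I would let $H$ be a subgraph of $G$ with $\delta(H)=k$ and show that $m$ cops cannot catch a robber who confines himself to $V(H)$, provided $m<\log_2 k$. The central claim---analogous to the one in the proof of Theorem~\ref{thm:degeneracy}---is that for every $v \in V(H)$ and every cop placement $(u_1,\ldots,u_m)$, there exist two distinct vertices $w, x$ in the open neighborhood of $v$ in $H$ with $d_G(u_i,w)=d_G(u_i,x)$ for every $i$. By the parity observation, each neighbor of $v$ in $H$ has distance vector lying in the product set $\prod_{i=1}^m \{d_G(u_i,v)-1,\, d_G(u_i,v)+1\}$, which has size at most $2^m$. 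Since $v$ has at least $\delta(H)=k>2^m$ neighbors in $H$, Pigeonhole supplies the desired twin pair. Note that $v$ itself cannot participate in such a pair, because the distance vector of $v$ differs in every coordinate's parity from that of each neighbor; this is why only the open neighborhood is available, whereas in Theorem~\ref{thm:degeneracy} the closed neighborhood sufficed.

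Given the claim, the robber's strategy mirrors the one in Theorem~\ref{thm:degeneracy}, with $N_H[v]$ replaced throughout by the open neighborhood $N_H(v)$. At each turn the robber, currently at some $v \in V(H)$, anticipates the cops' subsequent probe and selects twin neighbors $w,x$ of $v$ in $H$ sharing a distance vector, then moves to $w$; the alternative vertex $x$ produces a valid trajectory, consistent with all observed probes, ending somewhere other than the robber's true position, so the cops cannot uniquely identify him. The initial vertex is handled as in Theorem~\ref{thm:degeneracy}: the robber pretends to already occupy some arbitrary $v_0 \in V(H)$ and takes his actual starting position to be one of the twin neighbors of $v_0$ selected with respect to the cops' opening placement. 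The only substantive change from the original argument is that the robber must always move to a proper neighbor of his current vertex rather than being allowed to remain in place, and this causes no difficulty. The main step to verify carefully is the parity claim itself, but it follows cleanly from the bipartite structure.
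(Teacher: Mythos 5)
Your proposal is correct and follows essentially the same route as the paper: the paper's proof likewise replaces the closed neighborhood $N_H[v]$ by the open neighborhood $N_H(v)$, uses the bipartite parity fact that $d_G(u_i,w) \in \{d_i-1, d_i+1\}$ to cut the count of possible distance vectors to $2^m$, and applies the Pigeonhole Principle exactly as you do. Your added remarks (why $v$ itself cannot be a twin, and why forcing the robber to make a proper move is harmless) are correct elaborations of points the paper leaves implicit.
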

\begin{proof}
The proof proceeds exactly as with Theorem~\ref{thm:degeneracy}, except that for all $w \in N_H(v)$ we now have $d_G(u_i,w) \in \{d_i-1,d_i+1\}$, since no neighbor of $v$ occupies the same partite set as $v$.  Thus the vertices of $N_H(v)$ correspond to at most $2^m$ different distance vectors, so if $m < \log_2 k$, then some distance vector corresponds to more than one vertex in $N_H(v)$.
\end{proof}

We remark that results analogous to Theorem \ref{thm:degeneracy} and Corollary \ref{cor:chromatic} are known for metric dimension.  Chartrand et al.\ \cite{chart} showed that $\mathrm{dim(G)} \ge \log_3(\Delta(G)+1)$, while Chappell et al.\ \cite{chap} showed that if $\mathrm{dim}(G) = m$, then $\chi(G) \le 2^m$; both bounds were shown to be tight.

We conclude this section by showing that Theorem~\ref{thm:degeneracy} is tight.  To do this we produce, for all $k$, a graph $G_k$ with degeneracy $k$ and localization number $\log_3 (k+1)$.  Recall that the {\em strong product} of graphs $G$ and $H$ is the graph with vertex set $V(G) \times V(H)$, where $(u,v)$ is adjacent to $(u',v')$ provided that $u$ is adjacent to $u'$ in $G$ and $v=v'$, $u = u'$ and $v$ is adjacent to $v'$ in $H$, or $u$ is adjacent to $u'$ in $G$ and $v$ is adjacent to $v'$ in $H$.  We construct $G_k$ as follows.  Begin with the $k$-fold strong product of copies of $C_{40}$.  We refer to the vertices of this strong product as {\em core vertices}, and we represent each one using a $k$-dimensional vector with entries in $\{0,1,\dots, 39\}$; distinct vertices are adjacent provided that they differ by at most 1 (modulo $40$) in every coordinate.

In addition to the core vertices, $G_k$ contains $2k$ {\em satellite vertices}.  For all $i \in \{1,2, \dots, k\}$ and $t \in \{0,10\}$, we add edges joining the satellite vertex $s_{i,t}$ to all core vertices whose $i$th coordinate equals $t$.  We then subdivide each of these edges into a path of length $40$; we refer to the paths produced from this subdivision (including the original endpoints of the edge, namely the satellite and core vertex) as {\em threads} emanating from the corresponding satellite.  We will make repeated use of the following fact: for a core vertex $w$, if $w = (w_1, w_2, \dots, w_{k})$, then $d(s_{i,t},w) = 40 + \min\{\size{w_i-t},40-\size{w_i-t}\}$.  To see this, let $w' = (w_1, w_2, \dots, w_{i-1}, t, w_{i+1}, \dots, w_{k})$; it is clear that some shortest path from $s_{i,t}$ to $w$ contains $w'$, so $d(s_{i,t}, w) = d(s_{i,t},w') + d(w',w) = 40 + \min\{\size{w_i-t},40-\size{w_i-t}\}$.  In particular, $d(s_{i,t},w)$ depends only on the $i$th coordinate of $w$.

\begin{theorem}\label{thm:degen_tight}
For all positive integers $k$, the graph $G_k$ has degeneracy $3^k-1$ and localization number $k$.
\end{theorem}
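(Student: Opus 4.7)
The plan is to compute the degeneracy of $G_k$ exactly, deduce the lower bound from Theorem~\ref{thm:degeneracy}, and then produce a winning strategy for $k$ cops.

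For the degeneracy, the core subgraph $C_{40}^{\boxtimes k}$ is vertex-transitive and $(3^k-1)$-regular, so the degeneracy of $G_k$ is at least $3^k-1$. Conversely, any subgraph $H \subseteq G_k$ with $\delta(H) \ge 3^k$ cannot contain an internal thread vertex (which has degree $2$ in $G_k$) nor a satellite (whose only neighbors are internal thread vertices), so $H$ lies inside the core; but every core vertex has degree exactly $3^k-1$ there, a contradiction. Theorem~\ref{thm:degeneracy} then gives $\loc{G_k} \ge \log_3(3^k) = k$.

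For the upper bound, I would exploit the identity $d(s_{i,t},w) = 40 + \min\{|w_i - t|, 40 - |w_i - t|\}$ recorded just before the theorem: a cop sitting at the satellite $s_{i,t}$ probes only the $i$th coordinate of a core robber, with a fixed offset of $40$. Since adjacency in the strong product changes each of the robber's coordinates by at most one per round, the game on the core decouples into $k$ independent one-dimensional localization problems on $C_{40}$. The proposed $k$-cop strategy is to have cop $i$ occupy $s_{i,0}$ on odd rounds and $s_{i,10}$ on even rounds. Within a single coordinate, the probe from $s_{i,0}$ narrows $w_i$ to two values $\{a,-a\}$ (mod $40$) symmetric about $0$, the probe from $s_{i,10}$ narrows it to $\{10+b,10-b\}$ symmetric about $10$, and the constraint that the coordinate changes by at most one between consecutive rounds collapses the four-way Cartesian product of candidates to a single pair in all but a handful of configurations near the symmetry points $\{0,10,20,30\}$; one additional probe resolves any residual ambiguity, so after at most three rounds the robber's core vertex is known exactly.

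The remaining difficulty is a robber who strays onto a thread. Because every core vertex is at distance at least $40$ from every satellite but any interior vertex of the thread from $s_{i,t}$ is at distance strictly less than $40$ from $s_{i,t}$, cop $i$ detects such an excursion instantly and reads off the robber's position along the thread, while the other $k-1$ cops simultaneously resolve the remaining coordinates of the thread's core endpoint via an analogous coordinate-probe formula; the length-$40$ threads then give the cops ample time to finish the job before the robber can leave. The conceptually clean part of the argument is this strong-product decoupling, which reduces a $k$-dimensional problem to $k$ independent one-dimensional copies. The main technical obstacle is the careful bookkeeping needed to verify that the residual two-fold ambiguities, both the symmetry ambiguities near $\{0,10,20,30\}$ in the core case and the potential core-versus-thread overlap of distance vectors, always collapse within a bounded number of additional rounds, so that the strategy pins down the robber at a unique candidate in finitely many rounds.
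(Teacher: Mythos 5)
Your overall architecture matches the paper's: the degeneracy lower bound from the $(3^k-1)$-regularity of the core, the localization lower bound via Theorem~\ref{thm:degeneracy}, and a $k$-cop strategy in which cop $i$ alternates probes between $s_{i,0}$ and $s_{i,10}$ and uses the identity $d(s_{i,t},w)=40+\min\{\size{w_i-t},40-\size{w_i-t}\}$ to decouple the game into $k$ one-dimensional problems on $C_{40}$. (Your argument that the degeneracy is also at most $3^k-1$ is in fact more explicit than the paper's, which only verifies the lower bound.)

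There is, however, a genuine gap at exactly the point the paper flags as ``the key difficulty.'' You assert that a robber who strays onto a thread is detected ``instantly'' because interior thread vertices lie at distance less than $40$ from the satellite their thread emanates from. But each cop probes only one of her two satellites per round. If the robber steps onto a thread emanating from $s_{i,10}$ while cop $i$ is probing $s_{i,0}$, then his distance from cop $i$'s probe is $50+e$ (where $e$ is his depth into the thread) and his distance from every other probe lies in $[40+e,\,60+e]$; for small $e$ these values can all be consistent with the robber occupying a core vertex, so the excursion is not detected and the cops' coordinate inferences, which presume a core robber, are silently invalidated. This is precisely why the paper introduces ``critical'' coordinates and devotes most of the proof to a case analysis showing that (i) an undetectable excursion can begin only when the robber sits at the core endpoint of a thread, (ii) the resulting ambiguity is confined to a single observed value (e.g.\ a reading of $51$ from $s_{i,10}$ leaves open whether $x''_i=39$ or the robber has entered a thread at $s_{i,0}$), and (iii) one further round always resolves it, since a robber not currently at a thread endpoint cannot enter a thread interior on his next move; a separate argument is also needed for readings exceeding $60$, where the cops know the robber is on a thread but not which one. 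Your closing sentence correctly names this core-versus-thread overlap as the obstacle, but naming it is not resolving it: without that case analysis the strategy as described does not yet prove that $k$ cops win.
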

\begin{proof}
The $k$-fold strong product of copies of $C_{40}$ is regular of degree $3^k-1$, so clearly the degeneracy of $G_k$ is at least $3^k-1$.  By Theorem~\ref{thm:degeneracy}, we now have $\loc{G_k} \ge \log_3(3^k) = k$.  To complete the proof, it suffices to show that $k$ cops can locate a robber on $G$ and hence $\loc{G_k} \le k$.

Label the cops $1, 2, \dots, k$.  
Before presenting the full details of the cops' strategy, we give an overview.  In general, on each turn of the game, the robber either occupies some core vertex $(z_1, z_2, \dots, z_k)$ or some vertex on a thread ending at some such core vertex.  (It is also possible that the robber could occupy a satellite, but this case will be very easily dispatched.) To locate the robber, the cops need to determine coordinates $z_1, \dots, z_k$.  For each $i \in \{1, \dots, k\}$, cop $i$ will attempt to determine $z_i$, which she does by probing either $s_{i,0}$ or $s_{i,10}$. As we will show, it is relatively easy for the cops to locate the robber provided that he begins in the core and never leaves, and it likewise easy for the cops to locate the robber provided that they can be certain he has left the core; the key difficulty is in distinguishing between these two cases.

We present the cops' strategy in three stages.  Before presenting the cops' main strategy, we explain how they can locate the robber if at some point in the game some cop observes a distance smaller than 40 or larger than 60 (which would immediately indicate that the robber has left the core).  Next, we give the cops' main strategy, and we explain how this enables them to locate the robber provided that they can be certain he has never left the core.  Finally, we explain how the cops proceed if there is some ambiguity as to whether or not the robber has ever left the core.

First suppose that at some point in the game, some cop $c$ observes a distance strictly less than $40$; letting $v_c$ denote the satellite that this cop has just probed, the cops can infer that the robber occupies some thread emanating from $v_c$.  Let $z$ be the core vertex at the other end of this thread, and let $z = (z_1, z_2, \dots, z_{k})$.  The cops seek to determine the coordinates of $z$, which they can do with their next probe.

Say that cop $c$, when probing $v_c$, observed a distance of $40-d$ for some positive integer $d$.  If $d=40$, then the robber occupies $v_c$ and the game is over, so suppose otherwise.  Cop $c$ has already determined $z_c$: it is $0$ if $v_c = s_{c,0}$ and 10 if $v_c = s_{c,10}$.  Likewise, she knows the robber's distance from $z$ along the thread.  At the time of the cops' first probe, the robber was on an internal vertex in some thread, so with his ensuing move, he can only have moved along the thread.  With her next probe, cop $c$ probes $v_c$ again, and again she learns the robber's distance from $z$ along the thread. Once again we may suppose that the robber does not occupy $v_c$, since otherwise he has been located.  

Now consider some other cop $i$.  Cop $i$ can determine the distance from her first probe to $z$ by taking the distance she just observed and subtracting $d$, since the shortest path from her probe to the robber must pass through $z$, and the robber is $d$ steps from $z$ along the thread.  On her next turn, she probes whichever of $s_{i,0}$ and $s_{i,10}$ she did not just probe.  As before, she can determine her distance to $z$ using the results of cop $c$'s second probe.  At the time of the cops' first probe, the robber was on an internal vertex in some thread, so with his ensuing move, he can only have moved along the thread.   Thus, the coordinates of the endpoint of that thread -- that is, $z$ -- cannot have changed with his last move.  Cop $i$ knows, from her two probes, both $\min\{z_i,40-z_i\}$ and $\min\{\size{z_i-10},40-\size{z_i-10}\}$; using this information, she can uniquely determine $z_i$.  Collectively, the cops can uniquely determine $z$, so they know which thread the robber occupies; since they also know the robber's distance from $z$ along the thread, they have successfully located him.

Now suppose instead that at some point, some cop observed a distance of $60+d$ for some positive integer $d$.  Once again this indicates that the robber occupies some thread, but this time the cops cannot necessarily determine which satellite that thread emanates from.  If any cop observed a distance smaller than 40, then the cops can locate the robber using the strategy above, so suppose otherwise.  On the cops' next turn, each cop $i$ probes whichever of $s_{i,0}$ and $s_{i,10}$ she did not just probe.  If the robber still occupies a vertex internal to the thread, then some cop must observe a distance smaller than 40, and once again the cops can locate the robber.  Otherwise, the cops know that the robber has just moved into the core; hence, at the time of the cops' first probe, the robber was exactly one step from the core.  Taking this into account, each cop $i$ now has enough information to determine $z_i$ as in the previous paragraph, so once again the cops can locate the robber.

We now give the cops' ``main'' strategy. 
If at any point any cop observes a distance smaller than 40 or greater than 60, then the cops can locate the robber as explained above, so we assume throughout that this never happens.  The cops will attempt to determine the robber's location within three rounds.  The cops initially operate under the presumption that the robber always remains within the core, but they will remain alert for any indications that this may not be the case.  Under this presumption, let $x = (x_1, x_2, \dots, x_{k})$ denote the robber's position at the time of the cops' first probe, let $x' = (x'_1, x'_2, \dots, x'_{k})$ denote his position at the time of the second probe, and let $x'' = (x''_1, x''_2, \dots, x''_{k})$ denote his position at the time of the third probe.  The cops aim to determine $x''$ and thus win the game with their third probe.  

Below we describe a strategy for each individual cop. For each $i \in \{1, \dots, k\}$, cop $i$ aims to determine $x''_i$. Depending on the results of her probes, she may detect the possibility that the robber might have entered the interior of a thread emanating from either $s_{i,0}$ or $s_{i,10}$; if this happens, then we say that coordinate $i$ is \emph{critical}.  Should any coordinates be deemed critical within the cops' first three turns, the cops will need additional probes to determine whether or not the robber has, in fact, left the core.

On the cops' first turn, each cop $i$ probes satellite $s_{i,0}$; suppose she observes a distance of $40+d_i$ for some nonnegative integer $d_i$.  We consider five possibilities based on the value of $d_i$:

\begin{itemize}
\item[(a)] $2 \le d_i \le 8$.  In this case, either $2 \le x_i \le 8$ or $32 \le x_i \le 38$; consequently, either $0 \le x''_i \le 10$ or $30 \le x''_i \le 39$.  On her second and third turns, cop $i$ probes $s_{i,10}$.  She can now uniquely determine $x''_i$, as all 21 possible values for $x''_i$ yield different distances from $s_{i,10}$.
\medskip
\item[(b)] $12 \le d_i \le 20$.  In this case, $12 \le x_i \le 28$ so $10 \le x''_i \le 30$.  As in Case (1), by probing $s_{i,10}$ on her next two turns, cop $i$ can uniquely determine $x''_i$.
\medskip
\item[(c)] $d_i=1$.  In this case, $x_i \in \{39,1\}$.  On her second turn, cop $i$ probes $s_{i,10}$; say she observes a distance of $40+d'_i$.  If $d'_i \not = 10$, then she can determine $x''_i$ by probing $s_{i,10}$ on her third turn.  If instead $d'_i = 10$, then more care is needed.  We know that $x'_i = 0$.  This is problematic, since between the cops' second and third probes, the robber could leave the core and enter the interior of a thread emanating from $s_{i,0}$.  Regardless, on her third turn, cop $i$ probes $s_{i,10}$.  If she observes a distance of 49 then she knows that $x''_i = 1$, and if she observes a distance of 50 then she knows that $x''_i = 0$.  If she observes a distance of $51$, then either $x''_i = 39$ or the robber has entered the interior of a thread emanating from $s_{i,0}$, but she cannot determine which; in this case, we deem coordinate $i$ to be critical.
\medskip
\item[(d)] $d_i=0$.  Here, we know $x_i = 0$.  Again, this indicates that the robber might leave the core and enter a thread emanating from $s_{i,0}$.  On her second turn, cop $i$ probes $s_{i,0}$ once again; assuming that she doesn't observe a distance smaller than 40, we must have $x'_i \in \{39,0,1\}$.  On her third turn, she probes $s_{i,10}$.  As in Case (3), if she observes any distance other than 51 then she can determine $x''_i$; otherwise, she knows that either $x''_i = 39$ or the robber has entered the interior of a thread emanating from $s_{i,0}$, and again coordinate $i$ is critical.
\medskip
\item[(e)] $9 \le d_i \le 11$.  On her second turn, cop $i$ probes $s_{i,10}$; assuming that she does not observe a distance smaller than 40, she can verify that the robber has not yet left the core.  On her third turn, she probes $s_{i,0}$.  As in Cases (4) and (5), she may be able to conclude that the robber has not left the core, in which case she can determine $x''_i$.  Otherwise, she knows only that either the robber has entered the interior of some thread emanating from $s_{i,10}$ or $x''_i = 11$; in this case, once again coordinate $i$ is critical.
\end{itemize}

After the cops' third probe, if there are no critical coordinates, then the cops can be certain that the robber hasn't left the core, and thus (as outlined above) they can uniquely determine his position.  Suppose instead that at least one coordinate is critical.  For each $i \in \{1, \dots, k\}$, let $y_i$ denote cop $i$'s ``predicted'' value for $x''_i$ -- that is, the value of $x''_i$ provided that the robber has not left the core.  After the cops' third probe and the robber's ensuing turn, let $(z_1, z_2, \dots, z_{k})$ denote either the robber's current position (if in fact he remains in the core) or the core vertex at the end of the thread on which the robber resides (if he has left the core).  The cops play as follows, with each cop $i$'s strategy depending on the value of $y_i$.
\begin{enumerate}
\item[(a)] If $y_i = 39$, then cop $i$ probes $s_{i,0}$.  If she observes a distance smaller than 40, then the cops can locate the robber as explained earlier. If she observes a distance of exactly 40, then the robber must be in the core with $z_i = 0$.  If she observes a distance of 41, then the robber cannot possibly have just left the interior of a thread emanating from $s_{i,0}$, so $x''_i = y_i = 39$.  Consequently, the robber must be in the core and so $z_i = 39$, since if the robber had just entered the interior of a thread emanating from some other satellite, then cop $i$ would have observed a distance of 42.  Finally, if she observes a distance of 42, then perhaps the robber was in the core, has just entered the interior of a thread, and $z_i = 39$, or perhaps the robber remains in the core and $z_i = 38$; in this case, coordinate $i$ remains critical after the cops' turn.  

Note that the cops can uniquely determine $z_i$ provided that they can, collectively, determine whether or not the robber is currently in the core.
\medskip
\item[(b)] If $y_i = 11$, then cop $i$ probes $s_{i,10}$.  As usual, if she observes a distance smaller than 40, then the cops can locate the robber.  If she observes a distance of 40, then the robber is presently in the core and $z_i = 10$.  If she observes a distance of 41, then necessarily $z_i = 11$ and the robber remains in the core.  If she observes a distance of 42, then perhaps the robber was in the core, has just entered some thread, and $z_i = 11$, or perhaps he remains in the core and $z_i = 12$; in this last case, coordinate $i$ remains critical.
\medskip
\item[(c)] If $1 \le y_i \le 9$, then cop $i$ probes $s_{i,0}$.  Suppose she observes a distance of $40+d$ for some nonnegative integer $d$.  She now knows that either the robber remains in the core and $z_i = d$ or that the robber has entered some thread and $z_i = d-1$.
\medskip
\item[(d)] If $12 \le y_i \le 29$, then cop $i$ probes $s_{i,10}$.  As in the previous case, she can determine $z_i$ provided that the cops can deduce whether or not the robber remains in the core.
\medskip
\item[(e)] If $30 \le y_i \le 38$, then by probing $s_{i,0}$, cop $i$ can again determine $z_i$ provided that the cops can deduce whether or not the robber remains in the core.
\medskip
\item[(f)] If $y_i = 0$, then cop $i$ probes $s_{i,10}$.  If she observes a distance of 51, then the robber may have just entered the interior of some thread (possibly emanating from $s_{i,0}$), or it could instead be that the robber remains in the core and $z_i = 39$; in this case, coordinate $i$ remains critical after the cops' turn.  Otherwise, as before, the cop has enough information to determine $z_i$ provided that the cops can determine whether or not the robber remains in the core.
\medskip
\item[(g)] If $y_i = 10$, then cop $i$ probes $s_{i,0}$.  As in the previous case, if she observes a distance of 51, then the robber may have just entered the interior of some thread (possibly emanating from $s_{i,10}$), or it could instead be that $z_i = 11$; once again, coordinate $i$ remains critical after this round.  Otherwise, the cop again has enough information to determine $z_i$ provided that the cops can determine whether or not the robber remains in the core.
\end{enumerate}
In each case, if the cops can conclusively determine whether or not the robber is currently in the core, then cop $i$ can determine $z_i$ for all $i \in \{1, \dots, k\}$ and hence the cops can locate the robber.  If any cop observes a distance of exactly 40, then the robber must be in the core, so the cops can locate him.  If all distances observed exceed 40 but no coordinates are critical after this last round of probes, then again the the robber must be in the core and the cops can locate him.  Finally, suppose one or more coordinates are critical after this round, so the cops cannot tell whether or not the robber is presently in the core.  By the strategy above, the cops can be certain that the robber does not occupy the endpoint, in the core, of any thread; if he did, then they would have noticed this, concluded that he was in the core, and located him.  Thus, if in fact the robber does presently reside in the core, then he cannot possibly move into the interior of a thread with his next move.  Consequently, if the cops repeat the above strategy once more on their next turn, then there cannot be any critical coordinates; thus the cops can determine whether or not the robber is now in the core, after which they can locate him.
\end{proof}

We do not have a construction demonstrating the tightness of Theorem~\ref{thm:degeneracy_bipartite}.  However, the localization number of the hypercube $Q_{k}$ exceeds the bound in Theorem~\ref{thm:degeneracy_bipartite} by no more than 2; see Theorem~\ref{thm:hypercube}.
\end{section}

\begin{section}{Outerplanar graphs}

Bosek et al.\ \cite{nisse1} showed that $\loc{G}$ can be unbounded on the class of planar graphs and asked whether the same is true of outerplanar graphs. They answer this question in the negative in \cite{nisse2}, by showing that $\loc{G} \le 3$ when $G$ is outerplanar.  They actually prove $\zeta^*(G)\le 3,$ where $\zeta^*(G)$ is the corresponding parameter in the \emph{centroidal localization game}. In each round of this game (which is similar to the localization game), the cops receive only the relative distances between their location and the robber. More precisely, in this game, if the cops probe $u_1,u_2,\ldots , u_k$ and the robber is on $y$, then for all $1\le i <j \le k$ the cops learn whether $d(u,y)=0$, $d(u_i,y)=d(u_j,y)$, $d(u_i,y)<d(u_j,y),$ or $d(u_i,y)>d(u_j,y).$  Note that for all graphs $G$, we have that $\loc{G} \le \zeta^*(G)$.

Bosek et al.\ \cite{nisse2} ask whether there exists an outerplanar graph with localization number 3; that is, whether their bound on $\loc{G}$ is tight. We answer this question by showing that in fact $\loc{G} \le 2$ when $G$ is outerplanar.  (This bound is clearly tight; for example, $\loc{C_3} = 2$.)

Recall that a {\em block} of a graph $G$ is a maximal 2-connected subgraph of $G$; every graph is the edge-disjoint union of its blocks.

\begin{theorem}\label{thm:outerplanar}
If $G$ is an outerplanar graph, then $\loc{G} \le 2$.
\end{theorem}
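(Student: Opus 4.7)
The plan is to argue by strong induction on $|V(G)|$, with trivial base cases for outerplanar graphs on at most three vertices, splitting the inductive step into a cut-vertex case and a 2-connected case.

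Suppose first that $G$ has a cut vertex. Let $B$ be a leaf block of the block-cut tree of $G$, let $v$ be the unique cut vertex contained in $B$, and set $H = G - (V(B) \setminus \{v\})$. Both $B$ and $H$ are outerplanar graphs on strictly fewer vertices than $G$, so by the inductive hypothesis each admits a 2-cop winning strategy. The strategy on $G$ would use one cop anchored at $v$ on essentially every turn: probing $v$ detects the robber on $v$ immediately, and since $v$ separates $V(B) \setminus \{v\}$ from $V(H) \setminus \{v\}$ and the robber moves at most one step per round, the cops can always deduce on which side of $v$ the robber currently resides. The second cop then executes the inductive strategy inside that side, with the first cop briefly stepping away from $v$ only when two simultaneously mobile probes are required; care must be taken so that the robber cannot slip across $v$ undetected during those moments, but this is forced by the distance-$0$ reading at $v$ that any crossing induces.

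Now suppose $G$ is 2-connected. Then $G$ has a Hamilton cycle $C = v_0 v_1 \cdots v_{n-1}$ (its outer face), and all remaining edges are pairwise non-crossing chords of $C$. The strategy here maintains an invariant: after each phase of constantly many rounds, the set of candidate positions for the robber lies in a contiguous arc $A$ of $C$, and $|A|$ strictly decreases per phase. Each phase uses the two cops' probes to bisect $A$; concretely, if $A$ has no chord lying inside it, then the cops probe two cycle vertices bracketing $A$, which suffices by the same argument that gives $\mathrm{dim}(C_m) = 2$ via two adjacent probes, while if $A$ does contain a chord, the cops probe the endpoints of an innermost such chord, which acts as a separator of $G$ and tells them which sub-arc of $A$ contains the robber.

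The main obstacle is making the 2-connected argument rigorous in the presence of chords. Chords can introduce shortcuts that render two vertices of $A$ indistinguishable by a single probe, and chords of $G$ with only one endpoint in $A$ can create \emph{phantom} candidates outside $A$. The heart of the proof is a technical lemma asserting that for any 2-connected outerplanar $G$ and contiguous arc $A$ of its Hamilton cycle, the two chosen probes always either split $A$ properly or leave only a constant-size ambiguity that the anchor cop can resolve on the following round. The key structural input is the non-crossing property of the chords, which ensures that every chord of $G$ acts as a separator and that distances from the endpoints of such a chord behave monotonically within each sub-arc it creates. Once this lemma is established, iterating it until $|A| \le 2$ and then applying one further probe completes the proof.
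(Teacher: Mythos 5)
Your proposal is an outline whose hardest steps are deferred to claims that are either unproven or, as stated, false, so it does not yet constitute a proof. In the cut-vertex case, the decisive moment is exactly the one you wave at: when both probes are needed inside one side, the anchor cop must leave $v$, and in that round there is no ``distance-$0$ reading at $v$'' to detect a crossing --- a probe at $v$ that is not made cannot report anything. Worse, even while a cop sits on $v$, a single reading $d(v,\text{robber})=1$ does not tell the cops which side of $v$ the robber is on, and a robber who oscillates into $B$ and back can desynchronize the inductive strategy being run on $H$ (that strategy's correctness assumes the robber stays in $H$). The paper avoids a block-by-block induction for precisely this reason: it instead maintains a global ``cop territory'' with two probed endpoints every round, so that the robber can never cross an endpoint undetected, and the transition from one block to the next is itself a nontrivial multi-round argument (probing a cut vertex together with its neighbors in the next block, one at a time, to determine which attached component holds the robber).

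In the 2-connected case, the ``technical lemma'' you defer to is the entire content of the theorem, and the structural facts you cite in its support do not hold. Distances from a chord endpoint are not monotone along the sub-arcs it creates: in a fan (hub $v_0$ joined to all of $v_1,\dots,v_{n-1}$ on the outer cycle), all of $v_3,\dots,v_{n-2}$ are at distance $2$ from $v_1$, so probing the endpoints of an innermost chord, or two vertices ``bracketing'' the arc as for $\mathrm{dim}(C_m)=2$, need not split the candidate set at all. You also do not address how the candidate arc $A$ is kept from re-expanding when the robber moves after each probe; since the robber moves every round, shrinkage of $A$ is worthless unless the probes physically block re-entry, which is why the paper probes the two arc endpoints essentially every round and only advances an endpoint when the observed distances certify that the robber could not have slipped past (the delicate subcase being two equidistant chords to a common apex, resolved by a further sequence of probes around that apex). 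To repair your argument you would need to replace bisection by this kind of endpoint-advancing invariant and prove the leakage-prevention claims; as written, the proposal identifies the right objects (outer cycle, non-crossing chords as separators, an arc of candidates) but not the mechanism that makes two cops suffice.
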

\begin{proof}
We give a strategy for two cops to locate a robber on $G$. Throughout the game, the cops will maintain a set of vertices called the {\em cop territory}.  The cop territory will be a connected subgraph of $G$, and the cops will distinguish two distinct vertices of the cop territory as the {\em endpoints} of the territory.  The cops will maintain three invariants:
\begin{description}
\item[(1)] Immediately after a probe, the cops can be certain that the robber does not occupy any vertex of the cop territory.
\item[(2)] No vertex in the cop territory, with the possible exception of the endpoints, is adjacent to any vertex outside the cop territory.
\item[(3)] Both endpoints belong to the same block of $G$.
\end{description}
We give a strategy for the cops to gradually enlarge the cop territory; since $G$ is finite, this process cannot continue indefinitely, so the cops must eventually  locate the robber.  Throughout the game, if either cop observes a distance of 0 on her probe, then she has located the robber and the cops have won; thus, in the proof below, we implicitly assume that this has not happened.

The cops' general approach is as follows.  The cops will focus on one block of $G$ at a time.  Over the course of several turns, they will ensure that the robber does not occupy any vertex of this block and, in the process, expand the cop territory to contain all vertices in the block.  They will then move on to a new block that is ``closer'' to the robber and repeat the process until they have located the robber.  Throughout the proof, $B$ will denote the block that the cops are currently probing, and $\vl$ and $\vr$ will denote the endpoints of the cop territory.  We sometimes refer to $\vl$ (respectively $\vr$) as the {\em left endpoint} (resp. {\em right endpoint}) of the cop territory, and we refer to the cop who has most recently probed $\vl$ (resp. $\vr$) as the {\em left cop} (resp. {\em right cop}).  For a vertex $v$ in $B$, we define $G_v$ to be the (possibly empty) subgraph of $G-v$ not containing any vertices of $B$.  Informally, $G_v$ is the collection of blocks ``attached to'' $v$; that is, those blocks on the other side of $v$ from $B$.  (See Figure~\ref{fig:outerplanar_gv}.)  In what follows, we will repeatedly use the following observation: for any two distinct vertices $u$ and $v$ in $B$, if the robber occupies $G_v$, then he must be closer to $v$ than to $u$.

\begin{center}
\begin{figure}[ht]
\includegraphics{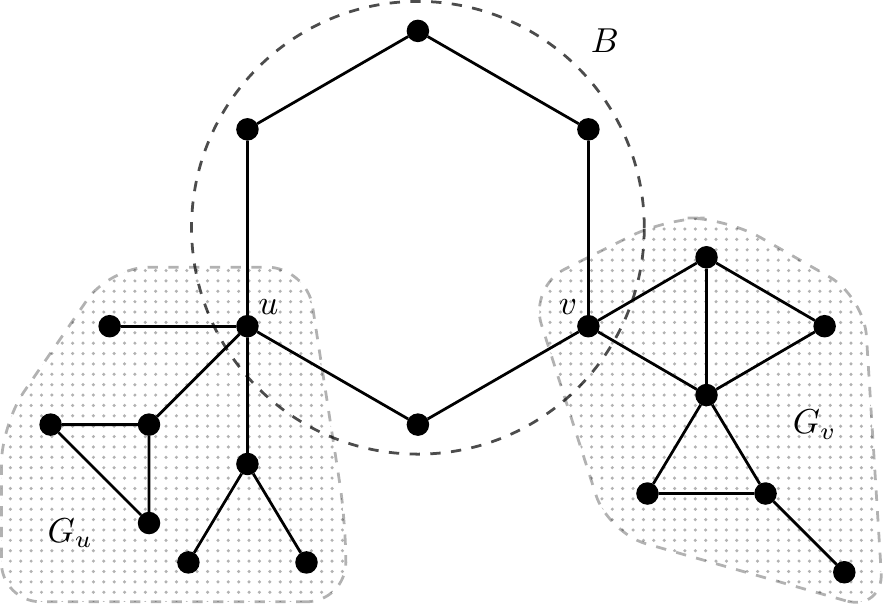}
\caption{. An outerplanar graph $G$ with subgraphs $G_u$ and $G_v$, the collections of blocks attached to $u$ and to $v$, respectively.}
\label{fig:outerplanar_gv}
\end{figure}

\end{center}

Initially, the cops choose any block $B$ of $G$, choose adjacent vertices within $B$ to comprise the cop territory, designate these vertices $\vl$ and $\vr$, and probe them.  It is evident that all three invariants hold.  To show how the cops can enlarge the cop territory, we consider the structure of $B$. 

Suppose first that $B$ is $K_2$.  Since both $\vl$ and $\vr$ are cutvertices (or pendant vertices) in $G$, the robber must be closer to one than to the other; without loss of generality, suppose he is closer to $\vl$.  The cops now know that the robber cannot be in $G_{\vr}$, so they may add all vertices of $G_{\vr}$ to the cop territory.   On their next turn, the cops choose any neighbor of $\vl$ that is not in the cop territory and designate this vertex to be the new $\vr$.  They then probe $\vl$ and $\vr$, and they add $\vr$ to the cop territory.  The robber cannot occupy $\vl$ or $\vr$ (since otherwise the cops would have located him), and he was unable to pass through $\vl$ with his previous move, so he cannot be in the cop territory.  Moreover, it is clear that no vertex in the cop territory aside from the endpoints can have any neighbor outside the cop territory.  Finally, both endpoints clearly belong to the same block of $G$ (which the cops now take as the new block $B$).  Thus all three invariants have been maintained, and the cops have successfully enlarged the cop territory.

Suppose instead that $B$ is not $K_2$.  In this case, $B$ must itself be a 2-connected outerplanar graph.  Recall that a $2$-connected outerplanar graph can be represented as a Hamiltonian cycle with non-crossing chords drawn inside it.  Consider some such representation of $B$, and label its vertices $v_1, v_2, \dots, v_n$ in clockwise cycle order.  (For convenience, we may wish to refer to $v_{n+1}$, $v_{n+2}$, etc. later in the proof; indices should be adjusted modulo $n$ where needed.)  The intersection of the cop territory with $V(B)$ will consist of vertices $v_{\ell}, v_{\ell+1}, \dots, v_{r}$ for some $\ell$ and $r$; that is, it is an ``arc'' of the outer cycle.  By symmetry, we may suppose at all times that $\vl = v_{\ell}$ and $\vr = v_r$. (Note that this means that whenever the endpoints of the cop territory change, the values of $\ell$ and $r$ change accordingly.)  Henceforth, the cops play as follows.  The left cop probes $\vl$, while the right cop probes $\vr$.  Suppose that the robber was at distance $\dl$ from $\vl$ and distance $\dr$ from $\vr$.\\

\noindent {\bf Case 1}: All vertices of $B$ belong to the cop territory.\\

If in fact all of $V(G)$ belongs to the cop territory, then the cops have won, so suppose otherwise.  By invariant (2), every vertex outside the cop territory that is adjacent to a vertex inside the cop territory must be adjacent to $\vl$ or $\vr$, so the robber must reside in either $G_{\vl}$ or $G_{\vr}$.  Since all vertices of $B$ belong to the cop territory, $\vl$ and $\vr$ are either equal or adjacent along the outer cycle of $B$. If $\vl = \vr$, then $G_{\vl} = G_{\vr}$.  If instead $\vl$ is adjacent to $\vr$, then we cannot have $\dl = \dr$; if $\dl < \dr$ then the robber occupies a vertex in $G_{\vl}$, and if $\dr < \dl$ then the robber occupies a vertex in $G_{\vr}$. We assume henceforth that the robber occupies a vertex in $G_{\vl}$; a symmetric argument suffices for the case where he occupies a vertex in $G_{\vr}$.  If $G_{\vl} \not = G_{\vr}$, then the cops add all vertices of $G_{\vr}$ to the cop territory.  To proceed, the cops must determine which component of $G_{\vl}$ contains the robber.

Within $G_{\vl}$, let $B_1, B_2, \dots, B_m$ be the blocks containing $\vl$.  For $i \in \{1, \dots, m\}$, let $C_i$ be the subgraph of $G_{\vl}$ induced by $\vl$ and all vertices in the same component of $G_{\vl}-\vl$ as the vertices of $B_i-\vl$. (Informally, $C_i$ consists of all vertices ``on the same side of'' $\vl$ as $B_i$.) Note that any two $C_i$ share only one vertex, namely $\vl$, and the $C_i$ together contain all vertices in $G_{\vl}$.  The cops aim to determine which of these components the robber occupies.  They begin by determining whether or not the robber occupies $C_1$.  If $B_1 = K_2$, then they can easily do this by probing both $\vl$ and the other vertex of $B_1$, so suppose otherwise. Within $B_1$, let $w_1, w_2, \dots, w_k$ be the neighbors of $\vl$, in clockwise order around the outer cycle of $B_1$.  The cops probe $\vl$ and $w_1$; let $\dl$ and $d_1$ denote the robber's distances from $\vl$ and $w_1$, respectively.  Note that $d_1 \in \{\dl-1,\dl,\dl+1\}$.  If $d_1 \le \dl$, then the robber must be in $C_1$.  The cops now take $B_1$ as the new block $B$, take $\vl$ and $w_1$ as the new left and right endpoints of the cop territory, and add all vertices of $C_2 \cup C_3 \cup \dots \cup C_m$ to the cop territory.

Suppose instead that $d_1 = \dl+1$.  On their next turn, the cops probe $\vl$ and $w_2$; let $\dl'$ and $d_2$, respectively, be the distances observed.  Once again, if $d_2 \le \dl'$, then the robber must be in $C_1$ and the cops play as outlined in the preceding paragraph.  Otherwise, we must have $d_2 = \dl'+1$.  We claim that for all vertices $u$ in $B_1$ that lie on the clockwise arc from $w_1$ to $w_2$ (inclusive), the robber cannot occupy either $u$ or $G_{u}$.  Suppose otherwise, let $z$ denote the robber's current position, and let $y$ denote the robber's previous position (that is, his position at the time of the cops' previous probe).  Since $d_2 = \dl'+1$, some shortest path from $w_2$ to $z$ passes through $\vl$, and thus through $w_1$ as well (since $u$ lies on the arc from $w_1$ to $w_2$). Consequently, we have that $d(w_1,z) = d(w_2,z)-2 = d_2-2 = \dl'-1$.  Because $y$ and $z$ are adjacent, we also have $d_1 = d(w_1,y) \le d(w_1,z)+1 = \dl'$.  Similarly, $d(w_2,y) = d_1-2 = \dl-1$, hence $d_2 = d(w_2,z) \le d(w_2,y)+1 = \dl$.  Thus $\dl \ge d_2 = \dl'+1$, and yet $\dl' \ge d_1 = \dl+1$, so $\dl \ge \dl'+1 \ge \dl+2$, a contradiction.

The cops next probe $\vl$ and $w_3$, use this information to determine whether or not the robber lies between $w_2$ and $w_3$, and proceed in this manner until they either determine that the robber occupies $C_1$ (at which point they proceed as explained earlier) or exhaust all neighbors of $\vl$ in $B_1$.  In the latter case, they repeat the process in $B_2$, then $B_3$, and so forth.  Since the cops probe $\vl$ on every turn, the robber cannot move between the $C_i$, so eventually the cops determine which $C_i$ contains the robber, at which point they enlarge the cop territory and proceed into a new block.\\

\noindent {\bf Case 2}: $\dl = 1, \dr = 1,$ or both.\\

If both $\dl$ and $\dr$ are 1, then the robber's position is uniquely determined, since $\vl$ and $\vr$ can have at most one common neighbor outside the cop territory.  Thus, suppose that $\dl = 1$ but $\dr > 1$; a symmetric argument suffices when $\dr = 1$ and $\dl > 1$.  Note that since $\dr > \dl$, the robber cannot occupy $G_{\vr}$; if any vertices of $G_{\vr}$ do not yet belong to the cop territory, then the cops add them.  We consider two cases.  (Refer to Figure~\ref{fig:outerplanar_case2}.)
\begin{itemize}
\item[(a)] Suppose $\vl$ is adjacent to $v_{r+1}$.  Since $\dr > 1$, the robber cannot enter $\vr$ on his ensuing turn.  The cops now add $v_{r+1}$ to the cop territory and take $\vl$ and $v_{r+1}$ as the new endpoints.  Due to the presence of edge $\vl v_{r+1}$, there cannot be any edges joining $v_r$ to vertices of $B$ not in the cop territory, so invariant (2) still holds.  The cops have successfully enlarged the cop territory.
\medskip
\item[(b)] Suppose $\vl$ is not adjacent to $v_{r+1}$.  Of all the neighbors of $\vl$ in $B$ that are outside the cop territory, let $v_{s}$ denote the one furthest counterclockwise.  On their next turn, the left cop probes $\vl$ while the right cop probes $v_{s-1}$.  The cops now take $\vl$ and $v_{s-1}$ to be the left and right endpoints of the cop territory, respectively, and add to the cop territory $v_{r+1}, \dots, v_{s-1}$ along with $G_{v_{r+1}}, \dots, G_{v_{s-1}}$.  The robber cannot possibly occupy the cop territory: by choice of $s$ and the fact that $\dl = 1$, prior to his last move the robber could not have occupied $v_i$ or $G_{v_i}$ for any $i \in \{r+1, \dots, s-1\}$, and he cannot have reached any of these in just one step -- except perhaps for $v_{s-1}$, which the cops have just probed.  Thus invariant (1) holds; invariants (2) and (3) clearly hold as well.  Finally, since $\vl$ is not adjacent to $v_{r+1}$, we have $s \ge r+2$.  Thus $v_{s-1}$ is further clockwise than $\vr$, so the cops have enlarged the cop territory.\\
\end{itemize}

\begin{figure}[ht]
\begin{center}
\includegraphics{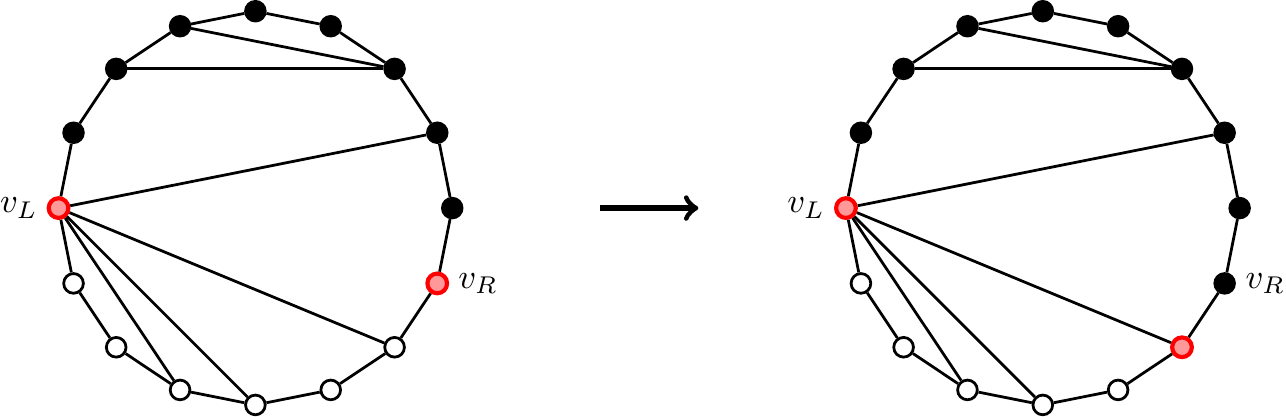}

\vspace{20.mm}

\includegraphics{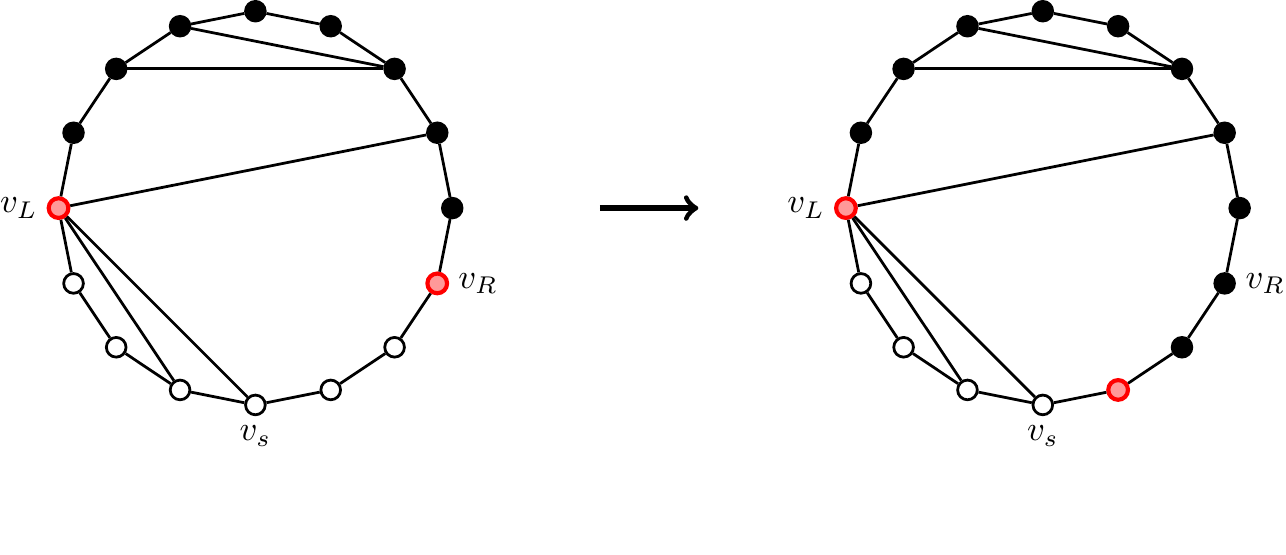}

\end{center}
\caption{. Top: Case 2(a).  Bottom: Case 2(b).  Filled vertices represent the interior of the cop territory; shaded vertices represent the endpoints; unfilled vertices represent the robber territory.  Only block $B$ is pictured.}
\label{fig:outerplanar_case2}
\end{figure}

\par
\noindent {\bf Case 3}: $\dl > 1, \dr > 1,$ and exactly one of $\vl$ and $\vr$ lies on a chord of $B$ joining it to a vertex outside the cop territory.\\

Suppose that $\vl$ lies on such a chord while $\vr$ does not; the other case is similar.
\begin{itemize}
\item[(a)] If all vertices of $G_{\vr}$ belong to the cop territory, then on their next turn the cops add $v_{r+1}$ to the cop territory as the new right endpoint.  (Note that since $\dr > 1$, the robber could not have entered $\vr$ on his last turn, so he cannot be in the cop territory.)
\medskip
\item[(b)] If part of $G_{\vr}$ does not belong to the cop territory and $\dr \ge \dl$, then the robber cannot occupy $G_{\vr}$, so the cops may safely add all vertices of $G_{\vr}$ to the cop territory.
\medskip
\item[(c)] Suppose part of $G_{\vr}$ does not belong to the cop territory and $\dr < \dl$.  If any vertices of $G_{\vl}$ do not yet belong to the cop territory, then the cops add them now.  Out of all neighbors of $\vl$ in $B$ that do not belong to the cop territory, let $v_s$ be the one furthest counterclockwise.  We claim that for all $i \in \{\ell-1, \ell-2, \dots, s\}$, the robber cannot have occupied either $v_i$ or $G_{v_i}$ immediately after the cops' probe.  To see this, note that the shortest path from $\vr$ to any such vertex must pass through $\vl$ or $v_s$, and $\vl$ is at least as close to both of these vertices as $\vr$.  Thus on their next turn the cops may take $\vr$ and $v_s$ as the new endpoints of the cop territory and add $v_i$ and all vertices of $G_{v_i}$ for all $i \in \{\ell-1, \ell-2, \dots, s\}$.\\
\end{itemize}

\noindent {\bf Case 4}: $\dl > 1, \dr > 1,$ and both $\vl$ and $\vr$ lie on chords of $B$ joining them to vertices outside the cop territory.\\

Of all vertices of $B$ adjacent to $\vl$, let $v_{s}$ be the farthest counterclockwise; of all vertices of $B$ adjacent to $\vr$, let $v_{t}$ be the farthest clockwise.  Let $H_L$ denote the subgraph comprised of $\vl = v_{\ell}, v_{\ell-1}, \dots, v_{s}$ and $G_{v_{\ell}}, G_{v_{\ell-1}}, \dots, G_{v_{s}}$.  Likewise, let $H_R$ denote the subgraph comprised of $\vr = v_r, v_{r+1}, \dots, v_{t}$ and $G_{v_{r}}, G_{v_{r+1}}, \dots, G_{v_{t}}$.  The cops would like to determine which of these subgraphs (if either) the robber presently inhabits.  We consider two subcases.
\begin{itemize}
\item [(a)] Suppose first that $v_{s} \not = v_{t}$.  If the robber is in $H_L$, then $\dl < \dr$: any path from $\vr$ to a vertex in $H_L$ must pass through either $\vl$ or $v_s$, and $\vl$ is closer than $\vr$ to both of these.  Thus, if $\dl \ge \dr$, then the robber cannot be in $H_L$, so the cops add all vertices of $H_L$ to the cop territory and take $v_{s}$ and $\vr$ as the endpoints.  Invariant (1) holds since the robber did not occupy $H_L$ before his last move and could only have entered $H_L$ through $v_{s}$; invariant (2) holds by choice of $v_{s}$.  Likewise, if $\dr > \dl$, then the cops add $H_R$ to the cop territory and take $\vl$ and $v_t$ as the endpoints.
\medskip
\begin{figure}[h]
\label{fig:outerplanar_case4}
\begin{center}
\includegraphics{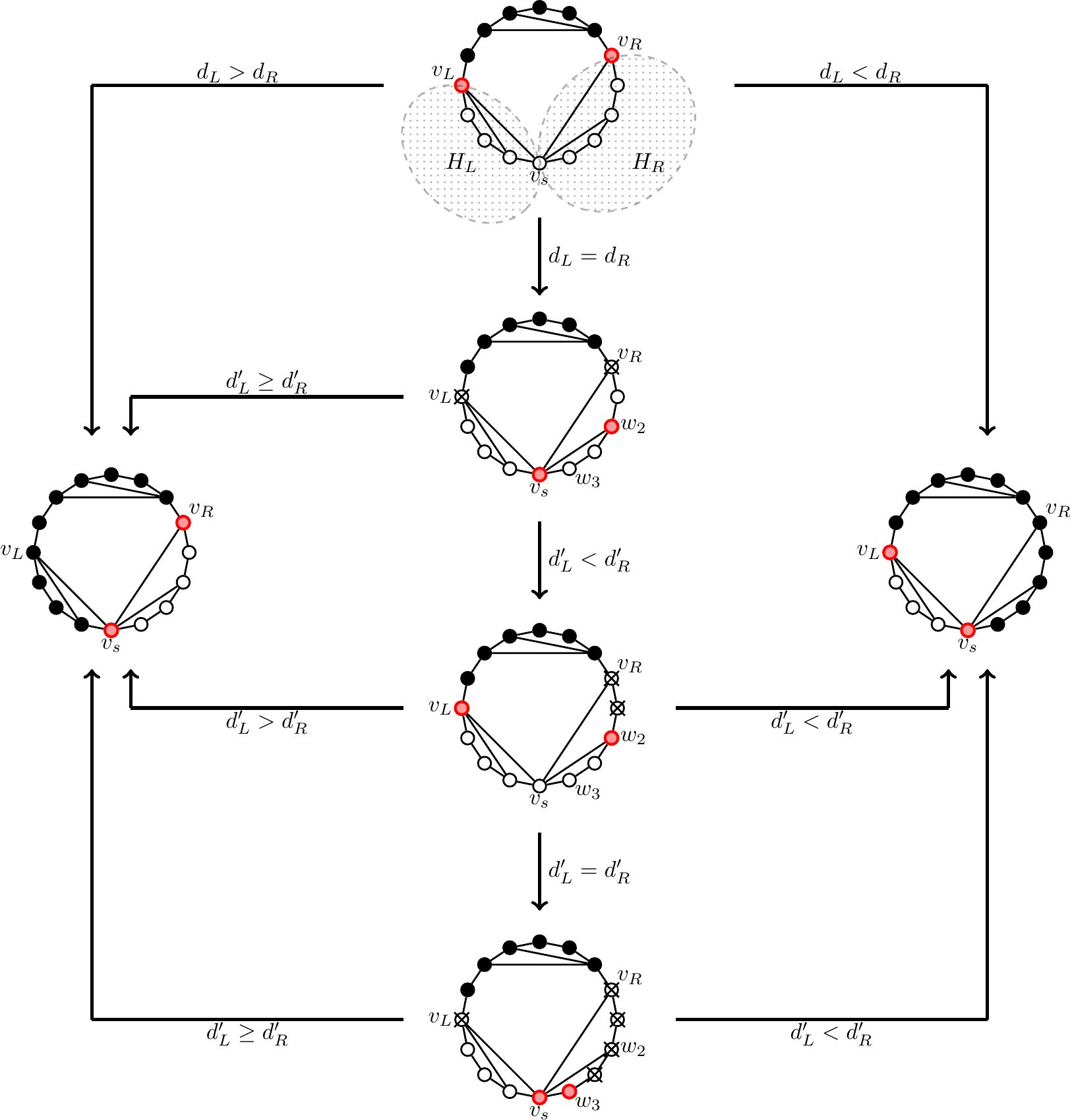}

\end{center}
\caption{. Case 4(b).  Filled vertices represent the interior of the cop territory; shaded vertices represent probes; unfilled vertices represent the robber territory.  Crossed-out vertices have been determined not to contain the robber.}
\end{figure}
\item [(b)] Suppose now that $v_{s} = v_{t}$.  This time, if the robber occupies $H_L$, we know only that $\dl \le \dr$ (and likewise if he occupies $H_R$, then $\dr \le \dl$).  If $\dl \not = \dr$, then the cops proceed as above.   Otherwise, more care is needed. In clockwise order, let $\vr = w_1, w_2, \dots, w_k$ be the neighbors of $v_{s}$ in $B$ that are counterclockwise from $v_{s}$.  For $i \in \{1, \dots, k-1\}$, let {\em sector $i$} refer to the arc of the outer cycle of $B$ from $w_i$ to $w_{i+1}$ (inclusive), together with the subgraphs $G_u$ for all vertices $u$ in this arc.  The cops aim to determine which sector (if any) the robber occupies.

On their next turn, the cops probe $v_{s}$ and $w_2$; let $d'_L$ and $d'_R$ denote the distances observed.  If $d'_L \ge d'_R$, then the robber cannot presently reside in $H_L$: every shortest path from $w_2$ to a vertex in $H_L$ must pass through either $\vl$ or $v_{s}$, and $v_{s}$ is closer to both of these than $w_2$ is.  In this case, as before, the cops may add all vertices of $H_L$ to the cop territory and take $v_{s}$ and $\vr$ as the endpoints.  Thus we may suppose that $d'_{L} < d'_R$; since $v_{s}$ and $w_2$ are adjacent, we must have $d'_R = d'_{L}+1$.

We claim that the robber cannot occupy sector 1.  Suppose to the contrary that the robber does occupy some vertex $u$ in sector 1, and note that $u \not = w_2$ (since the cops have just probed $w_2$).  Since $d'_R = d'_{L}+1$, some shortest path from $w_2$ to the robber passes through $v_{s}$ and, since the robber is in sector 1, through $\vr$ as well.  Thus, the distance from $\vr$ to $u$ is $d'_{L}-1$; since $u$ is adjacent to the robber's previous position, $\dr \le d(\vr,u)+1 = d'_{L} = d'_R-1$.  On the cops' previous turn (when they probed $\vl$ and $\vr$), we had $\dl = \dr$, so some shortest path from $\vr$ to the robber passed through $v_{s}$; since $u$ is in the interior of sector 1, the robber must have been in sector 1 on the previous turn, so this path must also have passed through $w_2$.  Thus, the distance from $w_2$ to the robber on that turn was $\dr-2$, so $d'_R = d(w_2,u) \le (\dr-2)+1 = \dr-1$.  We now have
$$\dr \le d'_R-1 \le (\dr-1)-1 = \dr-2,$$
a contradiction.

After the cops probe $v_s$ and $w_2$, and after the robber makes his ensuing move, he still cannot have entered the cop territory: since $\dl = \dr \ge 2$, he cannot have passed through either $\vl$ or $\vr$.  Moreover, before the robber's most recent move, the cops deduced that he was not in sector 1; hence he cannot have entered the interior of sector 1.  The cops now repeat this strategy, but with $w_2$ taking the place of $\vr$.  In particular, on their next turn, they probe $\vl$ and $w_2$; let $\dl$ and $\dr$ be the distances observed.  If $\dl \not = \dr$, then they can add either $H_L$ or $H_R$ to the cop territory, as before.  If $\dl = \dr = 1$, then the robber must occupy $v_{s}$.  If $\dl = \dr \ge 2$, then on their next turn the cops probe $v_{s}$ and $w_3$.  Depending on the results of that probe, the cops can either add $H_L$ to the cop territory or deduce that the robber is not in sector 2.  (Note that he also cannot be in sector 1: he cannot have traveled through $v_{s}$, and since $\dr \ge 2$, he cannot have traveled through $w_2$ either.)  Repeating this argument, the cops can eventually add either $H_L$ or $H_R$ to the cop territory and proceed.\\
\end{itemize}

\noindent {\bf Case 5}: $\dl > 1, \dr > 1,$ and neither $\vl$ nor $\vr$ lie on chords of $B$ joining them to vertices outside the cop territory.\\

Suppose first that both $G_{\vl}$ and $G_{\vr}$ contain vertices outside the cop territory. If $\dl \ge \dr$, then the robber cannot inhabit $G_{\vl}$, so the cops can add all vertices of $G_{\vl}$ to the cop territory.  Otherwise the robber cannot inhabit $G_{\vr}$, so the cops can instead add $G_{\vr}$ to the cop territory.   (In either case, $\vl$ and $\vr$ remain the endpoints.)\\

Finally, suppose that $G_{\vr}$ contains no vertices outside the cop territory.  (The case where $G_{\vl}$ contains no vertices outside the cop territory is similar.) Vertex $\vr$ has only one neighbor outside the cop territory, namely $v_{r+1}$.  The robber cannot have been on $v_{r+1}$ last round (since $\dr > 1$), so the cops may add $v_{r+1}$ to the cop territory and take it as the new right endpoint.


\end{proof}
\end{section}

\begin{section}{Hypercubes}

We conclude the paper by giving an asymptotically tight upper bound on the localization number of the hypercube.

\begin{theorem}\label{thm:hypercube}
For all positive integers $n$, we have that $\loc{Q_n} \le \ceil{\log_2 n} + 2$.
\end{theorem}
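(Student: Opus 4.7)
The plan is to set $m = \ceil{\log_2 n}$, label the $n$ coordinates of $Q_n$ by distinct binary strings $b(1), \ldots, b(n) \in \{0,1\}^m$, and for each $j \in \{1, \ldots, m\}$ define $S_j := \{i \in [n] : b(i)_j = 1\}$, with indicator vertex $\chi_{S_j} \in V(Q_n)$. Since distances in $Q_n$ are Hamming distances, $d_{Q_n}(\chi_S, r) = \size{S} + \mathrm{wt}(r) - 2\size{S \cap \mathrm{supp}(r)}$ for any $S \subseteq [n]$; hence a cop probing the all-zeros vertex $\mathbf{0}$ reveals $\mathrm{wt}(r)$, and a cop probing $\chi_{S_j}$ then reveals $\size{S_j \cap \mathrm{supp}(r)}$. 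Collectively, $m+1$ cops placed at the fixed vertices $\mathbf{0}, \chi_{S_1}, \ldots, \chi_{S_m}$ recover the signature $\Sigma(r) := \bigl(\mathrm{wt}(r),\, \sum_{i \in \mathrm{supp}(r)} b(i)\bigr) \in \mathbb{Z}^{m+1}$.

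The cops' strategy each round is to dedicate $m+1$ of the $m+2$ cops to these fixed probe vertices and have the remaining cop probe adaptively. After round $t$, let $C_t$ denote the set of positions still consistent with every observation to date; then $C_t \subseteq \Sigma^{-1}(\Sigma(r^t)) \cap N_{Q_n}[C_{t-1}]$, further reduced by the adaptive probe. When $n \le 2^m - 1$ all labels $b(i)$ may be chosen nonzero, so every coordinate of $r$ affects the signature; when $n = 2^m$, exactly one coordinate $i^{*}$ has label $0^m$ and is invisible to the fixed cops, and the adaptive probe must sometimes target $\chi_{\{i^{*}\}}$ to read off $r_{i^{*}}$. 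The $+2$ (rather than $+1$) is what supplies the extra degree of freedom needed to combine the missed-coordinate read with the adaptive disambiguation inside a fiber of $\Sigma$.

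The main obstacle is a careful combinatorial analysis of the signature fibers $\Sigma^{-1}(\sigma)$ and the dynamics of the candidate set over multiple rounds. Two supports $T \ne T'$ share a signature precisely when $\size{T} = \size{T'}$ and $\sum_{i \in T} b(i) = \sum_{i \in T'} b(i)$; since the labels $b(i)$ are distinct, the symmetric difference $T \triangle T'$ cannot contain just two elements, so $\size{T \triangle T'} \ge 4$ whenever $T \ne T'$ lie in a common fiber. Thus distinct candidates in a fiber are pairwise at Hamming distance at least $4$; combined with the constraint $C_{t+1} \subseteq N_{Q_n}[C_t]$, this rigidity forces the candidate set to remain small from round to round and allows the adaptive cop to pin down the robber within a bounded number of rounds. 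Executing this analysis cleanly, and in particular handling the missed-coordinate edge case when $n$ is a power of $2$ simultaneously with the propagation of residual ambiguity between rounds, is where the bulk of the technical work lies.
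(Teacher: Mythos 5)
Your setup is sound, and the structural fact you isolate---that two distinct supports $T \ne T'$ in a common fiber of $\Sigma$ satisfy $\size{T \triangle T'} \ge 4$---is correct (equal weight forces $\size{T\triangle T'}$ to be even, and it cannot be $2$ since the labels $b(i)$ are distinct) and is exactly the right lemma. Nevertheless, as written there is a genuine gap: you never show that the cops actually win. The claims that the candidate set ``remains small'' and that the adaptive cop finishes ``within a bounded number of rounds'' are unsubstantiated, and you explicitly defer ``the bulk of the technical work.'' The first claim is not even literally true: after one round, $C_1$ can be an entire distance class of a fiber, of size exponential in $n$. Your worry about the all-zeros label when $n=2^{\ceil{\log_2 n}}$ is also a red herring: flipping that coordinate changes $\mathrm{wt}(r)$, which the cop at $\mathbf{0}$ sees, so distinctness of the labels is all that is ever used and no special probe at $\chi_{\{i^*\}}$ is needed. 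The missing endgame, however, follows quickly from your distance-$4$ fact. Since $N_{Q_n}[u]$ has diameter $2$, each closed neighborhood meets each fiber in at most one vertex, so $C_{t+1} \subseteq N_{Q_n}[C_t]\cap \Sigma^{-1}(\sigma_{t+1})$ gives $\size{C_{t+1}}\le\size{C_t}$: the candidate set never grows. Now let the adaptive cop probe an arbitrary $x\in C_t$ in round $t+1$. Every $y\in C_t\setminus\{x\}$ satisfies $d(x,N_{Q_n}[y])\ge 3$, so if she observes a distance at most $1$, then $C_{t+1}\subseteq N_{Q_n}[x]\cap\Sigma^{-1}(\sigma_{t+1})$ is a single vertex and the robber is located; otherwise the observed distance is at least $3$, the vertex $x$ is eliminated, and $\size{C_{t+1}}\le\size{C_t}-1$. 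A finite, strictly decreasing candidate set yields capture in finitely many rounds, which is all that $\loc{\cdot}$ requires. You should supply this paragraph; without it the proof is incomplete.

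Once repaired, your argument is a genuinely different route from the paper's, even though the probe vertices essentially coincide: the paper also stations one cop at $\mathbf{0}$, one ``maintenance cop'' at each $\chi_{S_j}$, and one cop probing $e_{k-1}$ in round $k$. The paper's analysis reads off the \emph{round-to-round differences} of your signature to determine, each round, which single coordinate the robber flipped and in which direction; it thereby reconstructs his position one coordinate per round and wins in exactly $n$ rounds. Your analysis uses the \emph{absolute} signature to confine the candidates to a set that is pairwise far apart and whittles it down by one per round; it is conceptually cleaner but gives only a finite (potentially exponential) bound on the length of the game.
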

\begin{proof}
We represent vertices of $Q_n$ using binary ordered $n$-tuples, where two vertices are adjacent provided that the corresponding $n$-tuples differ in exactly one coordinate.  For this proof, it will be convenient to index coordinates starting from $0$; that is, our $n$-tuples have coordinates $0$ through $n-1$ (rather than $1$ through $n$).

We show how $\ceil{\log_2 n} + 2$ cops can locate a robber on $Q_n$.  We distinguish two cops, which we refer to as ``cop $C_0$'' and ``cop $C_1$'', and we refer to the rest of the cops as {\em maintenance cops}.  The cops will locate the robber over the course of $n$ probes. Intuitively, cops $C_0$ and $C_1$ will be in charge of ``learning'' one coordinate of the robber's position in each round, while the maintenance cops will be responsible for ``updating'' any coordinates that may have changed with the robber's last move. In the first round of the game, the cops aim to determine coordinate $0$ of the robber's position.  Subsequently, for $k \in \{2, \dots, n\}$, we suppose that just before to the cops' $k$th probe they know coordinates $0$ through $k-2$ of the robber's position prior to his most recent move, and with their ensuing probe they aim to determine coordinates $0$ through $k-1$ of his current position. 

On each cop turn, $C_0$ probes the vertex $(0,0,\dots,0)$.  This probe will give the cops some insight into which ``direction'' the robber is moving. In particular, when the robber's distance to $C_0$ decreases from one round to the next, the cops know that some coordinate of the robber's position has changed from $1$ to $0$.  Likewise, if the robber's distance to $C_0$ has increased, then some coordinate of his position has changed from $0$ to $1$, and if the distance to $C_0$ remains unchanged, then the robber hasn't moved.  

For $k \in \{1, \dots, n\}$, in the $k$th round of the game, cop $C_1$ probes the vertex for which coordinate $k-1$ is 1 and all other coordinates are 0. The results of this probe, in conjunction with the results of $C_0$'s probe, allow the cops to determine coordinate $k-1$ of the robber's current position.

Finally, we explain the maintenance cops' strategy. Label these cops $0, \dots, \ceil{\log_2 n}-1$.  Fix $k \in \{1, \dots, n\}$. Recall that for $k \ge 2$, just before the cops' $k$th probe, we suppose that the cops know coordinates $0$ through $k-2$ of the robber's position prior to his last move. With this next probe, the cops aim to determine coordinates $0$ through $k-1$ of the robber's current position. We have already seen how the probes by $C_0$ and $C_1$ let the cops determine coordinate $k-1$ of the robber's position; it is the maintenance cops' job to ``update'' coordinates $0$ through $k-2$ to reflect the robber's most recent move. To do this, for each $i \in \{0, \dots, \ceil{\log_2 n}-1\}$, maintenance cop $i$ probes the vertex of $Q_n$ in which, for all $j \in \{0, \dots, n-1\}$, coordinate $j$ is 1 if and only if the binary representation of $j$ has a 1 in the ``$2^i$'' bit. (If $k=1$, then there is no need to update any coordinates of the robber's position; however, the maintenance cops still probe these vertices, since the results will be needed in the next round of the game.) 

Now suppose $k\ge 2$ and suppose that on the robber's last turn, coordinate $j$ of his position changed from a 0 to a 1. (The case where some coordinate changes from 1 to 0 is symmetric, and the probe by $C_0$ allows the cops to distinguish between these cases -- as well as to detect the case where the robber remains in place.) Those maintenance cops probing a vertex where coordinate $j$ is 1 see that the robber has moved one step closer to their probes, while the others see that he has moved one step farther away.  Thus, for each $i \in \{0, 1, \dots, \ceil{\log_2 n} - 1\}$, maintenance cop $i$ can determine whether the binary representation of $j$ has a 0 or a 1 in the $2^i$ bit.  Between them, the cops have enough information to determine $j$. Since the cops now know which coordinate of the robber's position has changed, they can update their information about coordinates 0 through $k-2$ of his position (if indeed $0 \le j \le k-2$); in total, the cops now know coordinates $0$ through $k-1$ of the robber's position, as desired.

After their $n$th probe, the cops know all $n$ coordinates of the robber's position, and so they have located him.
\end{proof}

The cop strategy used above can actually be applied to a slightly more general class of graphs. Recall that the {\em Cartesian product} of graphs $G$ and $H$, written $G \cart H,$ is the graph with vertex set $V(G) \times V(H)$, where $(u,v)$ is adjacent to $(u',v')$ provided that $u$ is adjacent to $u'$ in $G$ and $v=v'$, or $u = u'$ and $v$ is adjacent to $v'$ in $H$.
\begin{theorem}\label{thm:hypercube_generalization}
If $G = G_0 \cart G_1 \cart \dots \cart G_{n-1}$, where each $G_i$ is a path, then $\loc{G} \le \ceil{\log_2 n} + 2$.
\end{theorem}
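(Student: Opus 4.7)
The plan is to port the cop strategy from Theorem~\ref{thm:hypercube} essentially verbatim, adjusting each probe vertex so that its coordinates live at endpoints of the respective paths. Identify $V(G_i)$ with $\{0, 1, \ldots, \ell_i - 1\}$, where $\ell_i = |V(G_i)|$, so vertices of $G$ are tuples $(x_0, \ldots, x_{n-1})$ with $d(x,y) = \sum_i |x_i - y_i|$. Two distinguished cops $C_0$ and $C_1$ together with $\ceil{\log_2 n}$ maintenance cops will suffice, playing as follows. Cop $C_0$ always probes $\mathbf{0} = (0, \ldots, 0)$; since $d(\mathbf{0}, x) = \sum_i x_i$, the change in $C_0$'s distance between consecutive rounds reveals whether the robber moved and, if so, whether some coordinate increased or decreased by $1$. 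In round $k$, cop $C_1$ probes the vertex with $\ell_{k-1}-1$ in coordinate $k-1$ and $0$ elsewhere; a short calculation gives $d(C_0) - d(C_1) = 2x_{k-1} - (\ell_{k-1} - 1)$, letting the cops read off the $(k{-}1)$-st coordinate of the robber's current position. Maintenance cop $i$ probes the vertex $v^{(i)}$ with $v^{(i)}_j = \ell_j - 1$ if bit $i$ of $j$'s binary expansion is $1$, and $v^{(i)}_j = 0$ otherwise.

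The one real obstacle is verifying that the maintenance cops can still identify which coordinate the robber moved, since in a longer path his coordinate is no longer forced into $\{0,1\}$. This is resolved by the fact that every $v^{(i)}_j$ is an extreme endpoint of $V(G_j)$: if the robber moves coordinate $j$ by $+1$, the legality of the move forces $x_j \le \ell_j - 2$, so $d(v^{(i)})$ changes by $+1$ when $v^{(i)}_j = 0$ and by $-1$ when $v^{(i)}_j = \ell_j - 1$; a move by $-1$ flips both signs. Hence, given the direction of the move (known from $C_0$), the pattern of $\pm 1$ changes across the $\ceil{\log_2 n}$ maintenance cops reproduces the binary expansion of $j$, uniquely identifying it.

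With this key fact in hand, the inductive scheme from Theorem~\ref{thm:hypercube} carries through with no further changes. After the $k$-th probe the maintenance cops update the cops' knowledge of coordinates $0, \ldots, k-2$ (if the robber's last move affected one of them), while $C_0$ and $C_1$ determine coordinate $k-1$ of the robber's current position. After $n$ rounds the cops know every coordinate of his position and hence have located him.
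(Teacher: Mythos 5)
Your proposal is correct and follows essentially the same route as the paper, which likewise adapts the hypercube strategy by replacing each probe coordinate of $1$ with $\size{V(G_i)}-1$ and observes that placing probe coordinates at path endpoints forces the $\pm 1$ distance changes needed for the maintenance cops to identify the moved coordinate. Your explicit verification of that sign pattern matches the paper's sketched justification.
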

In lieu of a full proof of Theorem~\ref{thm:hypercube_generalization}, we explain how the strategy from Theorem~\ref{thm:hypercube} can be adapted.  As before, we represent vertices of $G$ as ordered $n$-tuples, but they need no longer be binary $n$-tuples; instead, for $i \in \{0, \dots, n-1\}$, coordinate $i$ can take on any value from $0$ up to $\size{V(G_i)}-1$.

To locate a robber on $G$, the cops follow the same strategy as in Theorem~\ref{thm:hypercube}, with one change: for all $i \in \{0, \dots, n-1\}$, whenever a probe would have originally had 1 in coordinate $i$, the probe should instead have $\size{V(G_i)}-1$ in that coordinate.  (All other coordinates remain unchanged.)  As before, in the $k$th round of the game, the cops aim to determine the first $k-1$ coordinates of the robber's position.  It is straightforward to verify the following:
\begin{itemize}
\item In round $k$, cops $C_0$ and $C_1$ can determine coordinate $k-1$ of the robber's position.
\medskip
\item In each round, $C_0$ can determine whether the robber has incremented some coordinate of his position, decremented some coordinate, or remained in place.
\medskip
\item In each round, if the robber has changed his position, then the maintenance cops can determine which coordinate has changed.  As in the original strategy, when the robber increments some coordinate $j$ of his position, those maintenance cops whose probe has $\size{V(G_j)}-1$ in that coordinate will see that the robber has moved closer to them, while the rest will see that he has moved farther away; collectively, the maintenance cops have enough information to determine $j$.  (A similar argument works if the robber decrements some coordinate of his position.)\\
\medskip
\end{itemize}

Theorems~\ref{thm:degeneracy_bipartite} and \ref{thm:hypercube} together show that $\ceil{\log_2 n} \le \loc{Q_n} \le \ceil{\log_2 n} + 2$.  It is interesting to note that although the localization number and metric dimension are closely connected, we know $\loc{Q_n}$ up to an additive constant, but we know only that $\mathrm{dim}(Q_n) \sim \frac{2n}{\log_2 n}$ (see \cite{cm,er,lind}).  Thus not only do the two parameters differ by a great deal, we also have much tighter bounds on the localization number.
\end{section}

\begin{section}{Acknowledgments}
The authors are grateful to the anonymous referees, whose comments and suggestions greatly helped improve the presentation of this paper.
\end{section}

\end{document}